\newtheorem{theorem}{Theorem}
\newtheorem{proposition}[theorem]{Proposition}
\newtheorem{remark}[theorem]{Remark}
\begin{document}

\title[Natural invariants and equivalence of operators]{On natural invariants and equivalence of differential operators}
\author{ Valentin Lychagin,  Valeriy Yumaguzhin}

\begin{abstract}
We give a description of the field of rational natural differential
invariants for a class of nonlinear differential operators of order $k\ge 2$
 on a smooth manifold of dimension $n\ge 2$ and show their application to the equivalence problem of such operators.
\end{abstract}

\maketitle

\section{Introduction}

This paper completes our series of publications: \cite{LY1, LY2, LY3, LY4, LY5} devoted to differential invariants and the equivalence problem for linear (and a some class of weakly nonlinear) differential operators.

The problems of this type have a long history, but the maximal number of
significant results were obtained for operators of the second order.

Riemann \cite{RB} was the first, who analysed this problem and found curvature as
an obstruction to transform differential operators of the second order to operators with constant
coefficients.

In dimension two, Laplace \cite{Lap} found "Laplace invariants" which are relative
invariants of subgroup of rescaling transformations of unknown functions and Ovsyannikov [3] found the corresponding invariants.

It is worth to note, that for the case of ordinary differential operators it
was done by Kamran and Olver \cite{KO} and for the case of linear ordinary differential equations of any order relative invariants were found by Wilczynski \cite{Wil}.

All invariants, for hyperbolic equations in dimension two with respect to the
diffeomorphism pseudogroup were found by Ibragimov \cite{Ibr}.

In papers \cite{LY1} and \cite{LY2}, we used the Levi-Civita connection,
defined by the symbols of second order operators, and the quantization,
associated with this connection, found the field of rational differential
invariants for both linear and a class of nonlinear second order
differential equations. It is also allowed us to solve the equivalence
problem.

The case of differential operators of order $k=3$ on two-dimensional
manifolds was elaborated in \cite{LY4}, where we used the Wagner connection
instead of the Levi-Civita and associated with this connection quantization.

All these results were based on the existence of connections associated with
operators in the natural way. So, this method we applied in \cite{LY3} to
differential operators of order $k\geq 3,$ in arbitrary dimension, that have
constant type, i.e. such operators that the $\mathrm{GL}$-orbits of their
symbols do not depend on points of the base manifolds.

In this paper, we removed all above restrictions on the symbols and
considered arbitrary differential operators of order $k\geq 3$ in general
position, i.e. such operators that $\mathrm{GL}$-orbits of their symbols
are regular.

This paper is organized in the following way. At first, we are recalling the
basic results on natural invariants of differential operators and introduce
a class of nonlinear operators, where nonlinearity has defined by a finite
extension of the field of rational functions in one variable. Then we pay
attention to the fact that $\mathrm{GL}$-invariants of symbols are natural
differential invariants of the zero order for differential operator
themselves. The finding of $\mathrm{GL}$-invariants of $n$-ary forms is the
basic problem of invariant theory and goes back to seminal Hilbert's paper 
\cite{Hil}.

 Remark, that nowadays there are various algebraic methods for
algorithmic finding of such invariants (\cite{Dec, Pop}).

We discuss here only classical methods, based on transfectants (\cite{Olv, Br}). 

The crucial result here says that it is enough to have $n$
(=dimension of the base manifold) of natural invariants in general position to describe the complete field of rational differential invariants of symbols and of linear differential operators them selves. 

The application of the Rosenlicht theorem \cite{Ros} shows that this field
enough to describe regular orbits of the actions of the diffeomorphism
pseudogroup on the jets of differential operators. 

Moreover, it allows us to construct local models of linear differential
operators. 

Then, we illustrate the method in two and three-dimensional cases.

In the final part of the paper, we translate this approach to a class of
nonlinear operators.

\section{Preliminaries}

In this section, we collected some constructions and notions from jet
geometry (\cite{KL},\cite{KLV}) , that are important for us in this paper.

Let $M$ be a connected, smooth manifold, and let $\pi :E\left( \pi \right)
\rightarrow M$ be a vector bundle over $M.$

We denote by $C^{\infty }\left( \pi \right)$ the $C^{\infty }(M)$-module of smooth sections of 
$\pi$.

By $\pi _{k}:J^{k}\left( \pi \right) \rightarrow M$ we denote the bundles of 
$k$-jets of smooth sections of the bundle $\pi $, and by $\pi
_{k,l}:J^{k}\left( \pi \right) \rightarrow J^{l}\left( \pi \right) $ the
reductions of $k$-jets to $l$-jets, when $k>l$.

The following vector bundles and their jet bundles will be important for us:

\begin{itemize}
\item The tangent bundle $\tau :TM\rightarrow M$ and its symmetric powers $%
\tau _{k}:S^{k}TM\rightarrow M$. 

Their modules of sections we denote by 
$$
\Sigma _{k}(M) =C^{\infty}(\tau _k) 
$$ 
and elements of these modules are $k$-symmetric vector fields.

\item The bundles $\psi _{k}\colon\mathit{Diff}_{k}(M)\rightarrow M$ of
linear scalar differential operators of order $\leq k$. \newline
Their modules of sections $C^{\infty }\left( \psi _{k}\right) $ can be
identified with modules $\mathrm{Diff}_{k}(M)$ of linear
differential operators on $M,$ having order $\leq k$. \newline
The section, that corresponds to an operator $A\in \mathrm{Diff}_{k}\left(
M\right) ,$ we will denote by $s_{A}.$

\item We have the following exact sequences of the modules 
\begin{equation}
0\rightarrow \mathrm{Diff}_{k-1}\left( M\right) \rightarrow 
\mathrm{Diff}_{k}\left( M\right) \overset{\mathrm{smbl}}{%
\rightarrow }\Sigma _{k}\left( M\right) \rightarrow 0,  \label{exact seq}
\end{equation}%
where the $\mathrm{smbl}$-map sends operators $A\in \mathrm{Diff}%
_{k}\left( M\right) $ to their symbols $\sigma_A\in\Sigma _k(M)$.
\end{itemize}

Remark, that all these bundles and the correspondent jet bundles are natural, i.e. the action on $M$ of pseudogroup $\mathcal{G}\left( M\right) $
of local diffeomorphisms$,$ lifts to local automorphisms of the bundles 
$\psi_k$, $\tau_k$ and by prolongations lifts to local automorphisms 
of the their $l$-jet bundles $\psi_{k,l}, \tau_{k,l}$.

Functions on $J^{l}\left( \psi _{k}\right) $ or $J^{l}\left( \tau
_{k}\right) $ that are invariant with respect to these action are called 
\textit{natural }$l$-\textit{invariants} of differential operators or $k$%
-symmetric vector fields.

It is worth to note, that the $\mathcal{G}\left( M\right) -$action is
transitive on $M,$ and, therefore, the natural invariants are completely
defined by their values on the fibres $J_{b}^{l}\left( \psi _{k}\right) $ or 
$J_{b}^{l}\left( \tau _{k}\right) $ at a fixed basic point $b\in M.$

Denote by $\mathbf{G}_{r}$ the Lie group of $r$-jets of local
diffeomorphisms of $M,$ that fix the base point $b.$ Then the natural
invariants coincide with $\mathbf{G}_{r}$-invariants of the linear $\mathbf{G%
}_{k+l}$- action on vector space $J_{b}^{l}\left( \psi _{k}\right) $ or the
linear $\mathbf{G}_{l+1}$- action on vector space $J_{b}^{l}\left( \tau
_{k}\right) .$

Thus, we have algebraic action of the algebraic group, and in this case, the
Rosenlicht theorem \cite{Ros} states that the field of the rational natural
invariants separate regular orbits and that the codimension of a regular
orbit coincides with the transcendence degree of the field of rational
natural invariants.

In what follows, we will denote by $\mathcal{F}_{k, l}$, or simpler $\mathcal{%
F}_{l}$ ,when the order of operators fixed, the field of rational natural $l$%
-invariants of linear differential operators of order $\leq k.$

The similar notation, $\mathcal{F}_{k,l}^{\sigma }$ will be used for the
field of rational natural $l$-invariants of the $k$-symmetric vector fields.

Remark, that all morphisms in (\ref{exact seq}) are natural, i.e. commute
with the $\mathcal{G}\left( M\right) -$actions, and therefore we have the
following statement.

\begin{proposition}
The symbol mapping $\mathrm{smbl}$ establishes embedding of fields: 
\begin{equation*}
\mathcal{F}_{k,l}^{\sigma }\subset \mathcal{F}_{k,l}.
\end{equation*}
\end{proposition}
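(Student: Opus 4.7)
The plan is to exploit the naturality and surjectivity of the symbol map, lifted to jets, and then invoke the standard fact that pullback along a dominant equivariant morphism gives an embedding of invariant fields.

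First, I would observe that the exact sequence (\ref{exact seq}) gives a fiberwise surjective morphism of vector bundles $\mathrm{smbl}\colon \mathit{Diff}_k(M)\to S^kTM$ over $M$. Prolonging this bundle morphism to $l$-jets yields a surjective bundle map
\[
j^l(\mathrm{smbl})\colon J^l(\psi_k)\longrightarrow J^l(\tau_k),
\]
which at the fixed basepoint $b\in M$ restricts to a surjective linear map $J^l_b(\psi_k)\to J^l_b(\tau_k)$.

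Next, I would verify naturality at the level of jets: because $\mathrm{smbl}$ commutes with the $\mathcal{G}(M)$-action on the original bundles (as emphasized right after the exact sequence), the lifted map $j^l(\mathrm{smbl})$ is $\mathcal{G}(M)$-equivariant, and hence, at the basepoint $b$, equivariant for the linear actions of $\mathbf{G}_{k+l}$ on $J^l_b(\psi_k)$ and of $\mathbf{G}_{l+1}$ on $J^l_b(\tau_k)$ (the latter acting through the obvious projection $\mathbf{G}_{k+l}\to\mathbf{G}_{l+1}$).

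Then I would take any $f\in\mathcal{F}_{k,l}^{\sigma}$ and define its image to be $j^l(\mathrm{smbl})^*f$. Since $j^l(\mathrm{smbl})$ is a surjective linear map of vector spaces, the induced pullback on rational function fields is well-defined and injective: a nonzero rational function on the target cannot vanish identically on a surjective image. Equivariance of $j^l(\mathrm{smbl})$ ensures that the pullback of a natural invariant is again a natural invariant, so $j^l(\mathrm{smbl})^*$ sends $\mathcal{F}_{k,l}^{\sigma}$ into $\mathcal{F}_{k,l}$; injectivity then upgrades this to an embedding of fields.

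The only point that requires genuine attention is the equivariance of $j^l(\mathrm{smbl})$, i.e. that prolongation of a natural bundle morphism to jets preserves the lifted pseudogroup action. This is standard in the jet-geometry framework of \cite{KL,KLV}, but it is the step one must be honest about; once it is granted, the rest is formal from the surjectivity of $\mathrm{smbl}$ in the exact sequence (\ref{exact seq}).
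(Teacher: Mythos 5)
Your argument is correct and is exactly the expansion of the paper's own (one-line) justification: the paper simply remarks that all morphisms in the exact sequence (\ref{exact seq}) are natural and lets the claim follow, which amounts to your observation that the jet prolongation of $\mathrm{smbl}$ is a surjective $\mathcal{G}(M)$-equivariant map, so pullback embeds the invariant field $\mathcal{F}_{k,l}^{\sigma}$ into $\mathcal{F}_{k,l}$. Nothing essential differs between your route and the paper's.
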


\section{Invariant universal constructions}

Follow the construction of the universal differential $1$-form (so-called
Liouville form ) on the cotangent bundle $\tau ^{\ast }:T^{\ast
}M\rightarrow M,$ we define (see \cite{LY3}) universal differential operators on the
jet bundles of the differential operators bundles $\psi _{k},$ as well as on
jet bundles of the bundles $\tau _{k}.$

Remind, that the Liouville form is a differential 1-form $\rho $ on the
total space $T^{\ast }M$ of the cotangent bundle $\tau ^{\ast },$ $\rho \in
\Omega ^{1}\left( T^{\ast }M\right) ,$ such that 
\begin{equation*}
s_{\theta }^{\ast }\left( \rho \right) =\theta ,
\end{equation*}%
for any differential form, $\theta \in \Omega ^{1}\left( M\right) ,$ on the
base manifold $M.$

Here, $s_{\theta }:M\rightarrow T^{\ast }M$ is the section of the cotangent
bundle that correspond to the form $\theta .$

This construction valid word by word to the bundles of symmetric k-forms: $%
\tau ^{k}:S^{k}T^{\ast }M\rightarrow M$ and define the k-analogue, $\rho
_{k}\in \Sigma ^{k}\left( S^{k}T^{\ast }M\right) ,$ of the Liouville form,
but it is not valid for bundles $\psi _{k}$ of differential operators or
bundles $\tau _{k}$ of symmetric vector fields.

To apply this construction to the differential operator bundles, we will
consider, at first, bundles of infinite order jets, say $\pi ^{\cdot
}:J^{\infty }(\pi)\rightarrow M$. 

Then, any vector field $X$ on manifold $M$ defines it \textit{total lift}
$\widehat{X}$ , i.e. derivation $\widehat{X}\colon C^{\infty}\big(J^{\infty}(\pi)
\big) \rightarrow C^{\infty }\big(J^{\infty}(\pi)\big)$, i.e. sequence of derivations 
$\widehat{X}:C^{\infty }\left( J^{l}\left(
\pi \right) \right) \rightarrow C^{\infty }\left( J^{l+1}\left( \pi \right)
\right) ,$ $l\geq 0.$

Namely, if we consider functions on the jet bundle $J^{l}\left( \pi \right)
$, say $f\in C^{\infty }\left( J^{l}\left( \pi \right) \right)$, as
nonlinear differential operators $\Delta _{f}:C^{\infty }\left( \pi \right)
\rightarrow C^{\infty }\left( M\right) $ of order $\leq l,$ that acts as
follows%
\begin{equation*}
\Delta _{f}\left( h\right) =\left( j_{l}\left( h\right) \right) ^{\ast
}\left( f\right) ,
\end{equation*}%
where $j_{l}\left( h\right) :M\rightarrow J^{l}\left( \pi \right) $ is the
section that corresponds to the $l$-th jet of the section $h\in C^{\infty
}\left( \pi \right) .$

In this interpretation, operator $\Delta _{\widehat{X}\left( f\right) }$
that correspond to the function $\widehat{X}\left( f\right) $ is the
composition $X\circ \Delta _{f}$, i.e.%
\begin{eqnarray*}
\Delta _{\widehat{X}\left( f\right) } &=&X\circ \Delta _{f}, \\
\Delta _{\widehat{X}\left( f\right) }\left( h\right) &=&X\left( \left(
j_{l}\left( h\right) \right) ^{\ast }\left( f\right) \right).
\end{eqnarray*}%
The similar total lifting construction works also for differential operators.

Indeed, if $A\in\mathrm{Diff}_{k}(M)$, then the lift $%
\widehat{A}\colon C^{\infty}\big(\! J^{\infty}(\pi)\big)
\rightarrow C^{\infty}\big(\! J^{\infty}(\pi)\big)$, where 
$\widehat{A}:C^{\infty }\left( J^{l}\left( \pi \right) \right) \rightarrow
C^{\infty }\left( J^{l+k}\left( \pi \right) \right) ,$ $l\geq 0,$ acts as
above,%
\begin{equation*}
\Delta _{\widehat{A}\left( f\right) }\left( h\right) =A\left( \left(
j_{l}\left( h\right) \right) ^{\ast }\left( f\right) \right) .
\end{equation*}%
In the case, when $\pi =\psi _{k},$ we define universal differential
operator 
\begin{equation*}
\square :C^{\infty }\left( J^{\infty }\left( \psi _{k}\right) \right)
\rightarrow C^{\infty }\left( J^{\infty }\left( \psi _{k}\right) \right) ,
\end{equation*}%
where $\square :C^{\infty }\left( J^{l}\left( \psi _{k}\right) \right)
\rightarrow C^{\infty }\left( J^{l+k}\left( \psi _{k}\right) \right) ,$ as
follows.

Take a function $f\in C^{\infty }\left( J^{l}\left( \psi _{k}\right) \right)
,$ then we define the value of function $\square \left( f\right) \in
C^{\infty }\left( J^{l+k}\left( \psi _{k}\right) \right) $ at point $%
[A]_{a}^{k+l}\in J_{a}^{l+k}\left( \psi _{k}\right) ,$ that equals to $%
\left( k+l\right) $-jet of an operator $A\in \mathrm{Diff}_{k}\left(
M\right) $ at a point $a\in M,$ as value of $\widehat{A}\left( f\right) $ at
the point $a\in M.$

In the similar way, we define an universal $k$-symmetric vector field $\sigma
_{k}\in \Sigma _{k}\left( J^{\infty }\left( \tau _{k}\right) \right)$.

Namely, we define a total lift $\widehat{\theta }$ of a k-symmetric vector
field $\theta =X_{1}\cdot \ldots \cdot X_{k},$ where $X_{i}$ are vector
fields on $M$ and $\cdot $ stands for the symmetric product of the vector
fields, as follows: $\widehat{\theta }=\widehat{X}_{1}\cdot \ldots \cdot 
\widehat{X}_{k}.$ Then, the value of $\sigma _{k}$ at point $\theta _{a}\in
S^{k}T_{a}M$ equals to $\widehat{\theta }_{a}.$

Remark, that, by the construction, all these operators $\square ,$ as well
as tensors $\sigma _{k},$ are invariants of the diffeomorphism pseudogroup.

\begin{description}
\item[Coordinates] In the bundles $\psi _{k}$ we will take local coordinates 
$\left( x_{1}, \ldots, x_{n},u_{\alpha }\right) ,$ where $\left(
x_{1}, \ldots, x_{n}\right) $ are local coordinates on $M,$ $\alpha $ are multi
indices of lengths $\left\vert \alpha \right\vert \leq k,$ and $u_{\alpha
}\left( A\right) =A_{\alpha },$ if $A=\sum_{\alpha }A_{\alpha
}\partial ^{\alpha }$ in the coordinates $\left( x_{1}, \ldots, x_{n}\right) .$ 
\newline
The same coordinates $\left( x_{1}, \ldots ,x_{n},u_{\alpha }\right) ,\left\vert
\alpha \right\vert =k,$ we will use in the bundles $\tau _{k}.$\newline
In the jet bundles $\psi _{k,l}$ and $\tau _{k,l}$ we will use the standard
jet coordinates $\left( x_{1}, \ldots ,x_{n},u_{\alpha ,\beta }\right) ,$ where
multi indices $\beta $ stand for the differential orders.\newline
Then, the universal differential operator has the form:%
\begin{equation*}
\square =\sum\limits_{\left\vert \alpha \right\vert \leq k}u_{\alpha }\frac{%
d^{\left\vert \alpha \right\vert }}{dx^{\alpha }},
\end{equation*}%
where $\displaystyle\frac{d}{dx_{i}}$ are the total derivatives.\newline
\newline
Respectively, the universal $k-$symmetric vector field $\sigma _{k}$ has the
form:%
\begin{equation*}
\sigma _{k}=\sum\limits_{\left\vert \alpha \right\vert =k}u_{\alpha }\left( 
\frac{d}{dx_{1}}\right) ^{\alpha _{1}}\cdot \ldots \cdot \left( \frac{d}{%
dx_{n}}\right) ^{\alpha _{n}}.
\end{equation*}
\end{description}

\section{The principle of n-invariants and equivalence of differential
operators}

We say that natural invariants $I_{1}, \ldots, I_{n}\in C^{\infty }\left(
J^{l}\left( \psi _{k}\right) \right)$, $n=\dim M,$ are in general position
in a domain $\mathcal{O\subset }$ $J^{l}\left( \psi _{k}\right) $ if 
\begin{equation*}
\widehat{d}I_{1}\wedge \cdots \wedge \widehat{d}I_{n}\neq 0
\end{equation*}%
in this domain.

Here $\widehat{d}$ is the total lift of the de Rham differential, $\Delta _{%
\widehat{d}I}=d\circ \Delta _{I},$ and in local coordinates it has the form: 
\begin{equation*}
\widehat{d}I=\sum\limits_{i=1}^{n}\frac{dI}{dx_{i}}dx_{i}.
\end{equation*}%
\ 

Let $A\in \mathrm{Diff}_{k}\left( M\right) ,$ $\mathcal{O}^{\prime }%
\mathcal{\subset }M,$ be such differential operator and domain in $M,$ that $%
S_{A}\left( \mathcal{O}^{\prime }\right) \subset \mathcal{O}.$

Then functions $x_{i}=I_{i}\left( A\right) ,$ $1\leq i\leq n,$ where $%
I\left( A\right) =S_{A}^{\ast }\left( I\right) $ is the restriction of
invariant $I$ on the graph of the section $S_{A},$ are coordinates in domain 
$\mathcal{O}^{\prime }.$

In this coordinates differential operator $A$ has the form%
\begin{equation}
A=\sum\limits_{\left\vert \alpha \right\vert \leq k}A_{\alpha }\partial
^{a},
\end{equation}%
where 
\begin{equation*}
A_{\alpha }=\frac{1}{\alpha !}A\left( x^{\alpha }\right) .
\end{equation*}%
We would like to avoid of using of these coordinates $x_{i},$ and will write
down this construction in terms of invariants only.

Namely, remind that the Tresse derivatives, we write down them as 
\begin{equation*}
\frac{df}{dI_{i}},
\end{equation*}%
where $f\in C^{\infty }\left( J^{\infty }\left( \psi _{k}\right) \right) ,$
are defined by the following relation :%
\begin{equation*}
\widehat{d}f=\sum\limits_{i=1}^{n}\frac{df}{dI_{i}}\widehat{d}I_{i},
\end{equation*}%
and they well defined in domains, where invariants $I_{i}$ are in general
position.

Then, the above functions $A_{\alpha }$ are restrictions of functions 
\begin{equation*}
J_{\alpha }=\frac{1}{\alpha !}\square \left( I^{\alpha }\right) .
\end{equation*}%
Now, the restriction of this formula on sections $S_A$ for all operators
$A$, such that $S_{A}\left(\mathcal{O}^{\prime }\right) \subset \mathcal{O}$,
gives us the above representation $A$ in the local coordinates.

Moreover, operator $\square $ commutes with $\mathcal{G}\left( M\right) $%
-action and therefore functions $J_{\alpha }$ are natural invariants.

Summarize, we get the following results.

\begin{theorem}[The n-invariants principle]
Let natural invariants\\ $I_{1}$, $\ldots$, $I_{n}$ be in general position in a domain $%
\mathcal{O\subset }J_{b}^{\infty }\left( \psi _{k}\right) $. Then in this
domain all natural invariants of $k$-th order linear differential operators
are rational functions of invariants $J_{\alpha }=\displaystyle\frac{1}{\alpha !}\square
\left( I^{\alpha }\right), \left\vert \alpha \right\vert \leq k$, and their
Tresse derivatives.
\end{theorem}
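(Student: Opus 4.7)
The plan is to use the invariants $I_1,\ldots,I_n$ as a canonical system of local coordinates on the base $M$, normalizing every operator $A$ whose graph sits in $\mathcal{O}$. Concretely, I set $x_i := S_A^{\ast}(I_i)$; by the general position hypothesis $\widehat{d}I_1\wedge\cdots\wedge\widehat{d}I_n\neq 0$ and the naturality of the $I_i$, this gives a gauge-equivariant coordinate system on $\mathcal{O}'$. In these coordinates, $A$ decomposes as $A=\sum_{|\alpha|\leq k}A_\alpha\,\partial^\alpha$ with $A_\alpha=\frac{1}{\alpha!}A(x^\alpha)$, and the key calculation is that $S_A^{\ast}(J_\alpha)=A_\alpha$: by the definition of the total lift, $\Delta_{\widehat{A}(I^\alpha)}=A\circ\Delta_{I^\alpha}$, so evaluating along $S_A$ at any point of $\mathcal{O}'$ returns $A(x^\alpha)$, up to the factor $\frac{1}{\alpha!}$.

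The second step is to identify the Tresse derivatives $\frac{d}{dI_j}$ with the partial derivatives $\frac{\partial}{\partial x_j}$ along $S_A$, which is forced by the defining relation $\widehat{d}f=\sum_i\frac{df}{dI_i}\widehat{d}I_i$ together with the fact that $x_i=I_i(A)$ are the coordinates themselves. Iterating, the natural invariants $\frac{d^{|\beta|}J_\alpha}{dI^\beta}$ restrict along $S_A$ to $\partial^\beta A_\alpha$ at the chosen base point, and therefore encode the full Taylor expansion of the normalized coefficients. In particular, for each finite $l$, the values of the $J_\alpha$ together with their Tresse derivatives of total order $\leq l$ reconstruct the $(l+k)$-jet of $A$ in the $x$-coordinates.

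Finally, I would translate this reconstruction into a separation-of-orbits statement for the $\mathbf{G}_{k+l}$-action on $J_b^l(\psi_k)$. If two regular jets assign identical values to all these invariants, then after normalization they are represented by the same concrete operator in the same $x$-coordinates; the only residual gauge consists of $(k+l)$-jets of diffeomorphisms fixing each $x_i$, which is trivial. Hence the subfield of $\mathcal{F}_{k,l}$ generated by the $J_\alpha$ and their Tresse derivatives separates regular orbits, and Rosenlicht's theorem upgrades this into equality of fields. The delicate point, and the main obstacle, is to verify this uniformly in $l$: one must match the number of algebraically independent Tresse derivatives against the transcendence degree of $\mathcal{F}_{k,l}$ (equivalently, the codimension of the generic orbit) at every order, so that no new invariants appear as $l$ grows beyond what iterated Tresse differentiation already produces.
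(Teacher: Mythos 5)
Your proposal follows essentially the same route as the paper: the paper's argument (given as the discussion preceding the theorem rather than a formal proof) likewise uses $x_i=S_A^{\ast}(I_i)$ as invariant coordinates, identifies the coefficients $A_\alpha=\frac{1}{\alpha!}A(x^\alpha)$ as the restrictions of $J_\alpha=\frac{1}{\alpha!}\square(I^\alpha)$, and lets the Tresse derivatives recover the higher jets, with the Rosenlicht theorem supplying the rationality/separation of regular orbits. Your final remark about matching transcendence degrees order by order is a fair caveat, but the paper does not address it in any more detail than you do.
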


This theorem allow us to construct models (or normal forms) of differential
operators.

Consider space $\Phi _{k}=\mathbb{R}^{n}\times\mathbb{R}^{\binom{n+k}{k}}$
with coordinates $\left( y_{1}, \ldots, y_{n}, Y_{\alpha }\right)$, where $\left\vert \alpha
\right\vert \leq k $. Then any differential operator $A\in \mathrm{Diff}_{k}(M)$ in a domain 
$\mathcal{O}\subset M$ defines a map%
$$
\phi _{A}\colon\mathcal{O}\rightarrow\Phi_{k},\quad
\phi _{A}\colon y\mapsto\big( y_{1}=I_{1}\left(A\right), \ldots, 
y_{n}=I_{n}\left(A\right), Y_{\alpha }=J_{\alpha }\left(A\right)\big).
$$
We call a pair $\left(A, \mathcal{O}\right)$ \textit{adjusted} if functions 
$\big( I_{1}\left(A\right),\ldots, I_{n}\left(A\right) \big)$ are
coordinates in the domain $\mathcal{O}$.

Then $n$-dimentional submanifold $\Sigma _{A}=\phi _{A}\left( \mathcal{O}%
\right) \subset\Phi_{k}$ we call \textit{model of the operator} in the domain.

\begin{theorem}
Let $\left( A_{1},\mathcal{O}_{1}\right) $ and $\left( A_{2},\mathcal{O}%
_{2}\right) $ , $A_{i}\in \mathrm{Diff}_{k}\left( M\right) ,\mathcal{O}%
_{i}\subset M,$ are adjusted pairs. Then there exist a diffeomorphism $F:%
\mathcal{O}_{1}\rightarrow \mathcal{O}_{2}$ such that $F_{\ast }\left(
A_{1}\right) =A_{2}$ if and only if their models equal, $\Sigma _{A_{1}}=$ $%
\Sigma _{A_{2}}.$
\end{theorem}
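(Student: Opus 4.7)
The strategy rests on the observation, extracted from the paragraphs preceding the theorem, that for an adjusted pair $(A,\mathcal{O})$ the functions $x_{i}=I_{i}(A)$ constitute a coordinate system on $\mathcal{O}$ in which the operator takes the canonical form
$$
A=\sum_{|\alpha|\le k}J_{\alpha}(A)\,\partial^{\alpha},\qquad J_{\alpha}(A)=S_{A}^{\ast}(J_{\alpha})=\tfrac{1}{\alpha!}A(x^{\alpha}).
$$
Thus the full information about $A$ up to change of coordinates is encoded in the graph of $(I_{1}(A),\ldots,I_{n}(A),J_{\alpha}(A))$, which is precisely the model $\Sigma_{A}\subset\Phi_{k}$.

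For necessity, suppose $F\colon\mathcal{O}_{1}\to\mathcal{O}_{2}$ is a diffeomorphism with $F_{\ast}(A_{1})=A_{2}$. Because $I_{i}$ and $J_{\alpha}$ are natural invariants of the $\mathcal{G}(M)$-action and the prolonged lift of $F$ carries $S_{A_{1}}$ to $S_{A_{2}}\circ F$, one obtains $F^{\ast}\bigl(I_{i}(A_{2})\bigr)=I_{i}(A_{1})$ and $F^{\ast}\bigl(J_{\alpha}(A_{2})\bigr)=J_{\alpha}(A_{1})$ for all admissible indices. Hence $\phi_{A_{2}}\circ F=\phi_{A_{1}}$ on $\mathcal{O}_{1}$, yielding $\Sigma_{A_{1}}=\phi_{A_{2}}(F(\mathcal{O}_{1}))=\Sigma_{A_{2}}$.

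For sufficiency, I would use the adjustedness hypothesis to view the map $\Psi_{i}\colon y\mapsto\bigl(I_{1}(A_{i})(y),\ldots,I_{n}(A_{i})(y)\bigr)$ as a diffeomorphism of $\mathcal{O}_{i}$ onto its image $U_{i}\subset\mathbb{R}^{n}$. Projection of $\Sigma_{A_{i}}$ onto the first $n$ coordinates of $\Phi_{k}$ gives exactly $U_{i}$, so the hypothesis $\Sigma_{A_{1}}=\Sigma_{A_{2}}$ forces $U_{1}=U_{2}$, and I can set $F:=\Psi_{2}^{-1}\circ\Psi_{1}\colon\mathcal{O}_{1}\to\mathcal{O}_{2}$. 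By construction $F^{\ast}\bigl(I_{i}(A_{2})\bigr)=I_{i}(A_{1})$, and the equality of models then forces $F^{\ast}\bigl(J_{\alpha}(A_{2})\bigr)=J_{\alpha}(A_{1})$ for every $|\alpha|\le k$. Read in the matching $I$-coordinate charts on $\mathcal{O}_{1}$ and $\mathcal{O}_{2}$, the two operators have identical coefficient expansions, and therefore $F_{\ast}(A_{1})=A_{2}$.

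The only delicate step is sufficiency: $F$ is produced from just the first $n$ components of the models, and one must verify that matching the remaining model data $J_{\alpha}$ suffices to transport $A_{1}$ to $A_{2}$. The mechanism is precisely the canonical coordinate formula $A_{\alpha}=J_{\alpha}$ recalled above, which converts the geometric equality $\Sigma_{A_{1}}=\Sigma_{A_{2}}$ into a pointwise identity of the coefficients of $A_{1}$ and $A_{2}$ in the matched charts.
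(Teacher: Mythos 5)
Your proof is correct and follows exactly the route the paper intends: the paper states this theorem without a written proof, presenting it as a direct consequence of the preceding discussion of the invariant coordinates $x_{i}=I_{i}(A)$ and the coefficient formula $A_{\alpha}=J_{\alpha}(A)$, which is precisely the mechanism you spell out (naturality of $I_{i},J_{\alpha}$ for necessity; the graph structure of $\Sigma_{A}$ over $U_{i}=\Psi_{i}(\mathcal{O}_{i})$ and recovery of the coefficients of $A$ from the $J_{\alpha}(A)$ in the matched charts for sufficiency). Your construction $F=\Psi_{2}^{-1}\circ\Psi_{1}$ and the verification that $F$ is the identity in the $I$-coordinates supply the details the paper leaves implicit; I see no gap.
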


\begin{remark}
The same type theorem valid for $k-$symmetric vector fields if we take 
\begin{equation*}
J_{\alpha }=\left\langle \sigma _{k},\widehat{d}h^{\alpha }\right\rangle ,
\end{equation*}%
where $,\widehat{d}h^{\alpha }$ is the symmetric power and $\left\vert
\alpha \right\vert =k.$
\end{remark}

\section{Transfectants, $\mathrm{GL}$-invariants of $n$-ary forms and zero
order natural invariants}

Keeping in mind algebraic invariants (i.e. differential invariants of zero
order) of symbols, we discuss here $\mathrm{GL}$-invariants of $n$-ary
forms.

It is worth to note that these invariants are rational functions that, in
turn, are ratios of $\mathrm{SL}$-invariants.

Remark, that the Rosenlicht theorem states that the transcendence degree of
the field of rational invariants equals to the codimension of the regular
orbit.

In the case, when the codimension greater or equal to $n=\dim M,$ it means
that, in general case, differential invariants of zero order enough to
construct models of differential operators.

So, we consider a vector space $V$ (equals to $T_{b}^{\ast }M\ $in the
previous notations) equipped with volume n-vector $\varpi =n!e_{1}\wedge
\cdots \wedge e_{n}$ in a basis $\left\{ e_{1},\cdots ,e_{n}\right\} $ of 
$V$.

Let $\mathbf{S}^{\cdot }=\oplus _{k\geq 0}S^{k}V^{\ast }$ be the algebra of
polynomials on $V$ and $\mathrm{SL}\left( V\right) $ be the Lie group of
linear transformations, preserving $\varpi .$

Write down n-vector $\varpi $ in the original form 
\begin{equation*}
\varpi =\sum\limits_{\sigma \in S_{n}}\left( -1\right) ^{\sigma }e_{\sigma
\left( 1\right) }\otimes \cdots \otimes e_{\sigma \left( n\right) }\in
V^{\otimes n},
\end{equation*}%
denote $\partial _{i}$ the derivations of degree $\left( -1\right) $ in
the algebra $\mathbf{S}^{\cdot }$ along vectors $e_{i}$, $\partial _{i}\left(
S^{k}V^{\ast }\right) \subset S^{k-1}V^{\ast }$, and represent $n$-vector $%
\varpi $ as the operator%
\begin{equation*}
\nabla =\sum\limits_{\sigma \in S_{n}}\left( -1\right) ^{\sigma }\partial
_{\sigma \left( 1\right) }\otimes \cdots \otimes \partial _{\sigma \left(
n\right) }:\left( \mathbf{S}^{\cdot }\right) ^{\otimes n}\rightarrow \left( 
\mathbf{S}^{\cdot }\right) ^{\otimes n},
\end{equation*}%
where 
\begin{equation*}
\left( \partial _{\sigma \left( 1\right) }\otimes \cdots \otimes \partial
_{\sigma \left( n\right) }\right) \left( f_{1}\otimes \cdots \otimes
f_{n}\right) =\partial _{\sigma \left( 1\right) }\left( f_{1}\right) \otimes
\cdots \otimes \partial _{\sigma \left( n\right) }\left( f_{n}\right) .
\end{equation*}

Define now \textit{transfectant }of order $l\geq 0,$ as operator 
\begin{equation*}
T_{l}=\mu \circ \nabla ^{l}:\left( \mathbf{S}^{\cdot }\right) ^{\otimes
n}\rightarrow \mathbf{S}^{\cdot },
\end{equation*}%
where $\mu :$ $\left( \mathbf{S}^{\cdot }\right) ^{\otimes n}\rightarrow 
\mathbf{S}^{\cdot }$ is the multuplication map: $\mu \left( f_{1}\otimes
\cdots \otimes f_{n}\right) =f_{1}\cdots f_{n}.$

We will denote the transfectant as follows 
$$
T_{l}\left( f_{1}\otimes \cdots\otimes f_{n}\right) =\left\{ f_{1},\cdots ,f_{n}\right\} _{l}.
$$

Remark, that both operators $\nabla $ and $\mu $ are $\mathrm{SL}\left(
V\right) $-invariant and therefore the transfectant operator is also $%
\mathrm{SL}\left( V\right) $-invariant\textit{.}

Moreover, the transfectants $\left\{ f_{1},\cdots ,f_{n}\right\} _{l}$ are

\begin{itemize}
\item skew symmetric%
\begin{equation*}
\left\{ f_{\sigma \left( 1\right) },\cdots ,f_{\sigma (n)}\right\}
_{l}=\left( -1\right) ^{l\sigma }\left\{ f_{1},\cdots ,f_{n}\right\}.
\end{equation*}

\item degree $-nl:$ 
\begin{eqnarray*}
T_{l} &:&S^{r_{1}}V^{\ast }\otimes S^{r_{2}}V^{\ast }\otimes \cdots \otimes
S^{r_{n}}V^{\ast }\rightarrow S^{R}V^{\ast }, \\
R &=&\sum\limits_{i=1}^{n}r_{i}-nl,\quad\text{if}\quad r_i\ge l. 
\end{eqnarray*}

\item Transfectants $J\left( f\right) =\left\{ f,\cdots ,f\right\} _{\deg
f}, $are $\mathrm{SL}\left( V\right) -$invariants for any n-ary forms.

Remark, that $J\left( f\right) =0$ if $\deg f$ - odd.

\item The explicit formulae for the transfectant are the following%
\begin{eqnarray*}
&&\left\{ f_{1},\cdots ,f_{n}\right\} _{l}= \\
&=&\sum\limits_{\sigma \in
A_{n}}\sum\limits_{k=0}^{l}\sum\limits_{\left\vert k_{\sigma }\right\vert
=l-k}\sum\limits_{\left\vert l_{\sigma }\right\vert =k-l}\left( -1\right)
^{k}\binom{l}{k}\binom{l-k}{k_{\sigma }}\binom{k}{l_{\sigma }}\mu (\partial
_{\sigma }^{k_{\sigma }}\partial _{\tau \sigma }^{l_{\tau\sigma }}\left(
f_{1}\otimes \cdots \otimes f_{n}\right) ),
\end{eqnarray*}%
where $k_{\sigma }=\left( k_{\sigma\left(1\right)}, \ldots, k_{\sigma\left(n\right)}\right) ,\sigma=\left(\sigma\left(1\right), \ldots, \sigma\left(n\right)\right) $ are multi indices, and $\tau \in S_{n}$ is a fixed odd permutation,%
\begin{equation*}
\partial _{\sigma }^{k_{\sigma }}=\partial _{\sigma \left( 1\right)
}^{k_{\sigma \left( 1\right) }}\otimes \cdots \otimes \partial _{\sigma
\left( n\right) }^{k_{\sigma \left( n\right) }}.
\end{equation*}

\item Thus, for the two-dimensional case, we have%
\begin{equation*}
\left\{ f_{1},f_{2}\right\} _{l}=\sum\limits_{k=0}^{l}\left( -1\right) ^{k}%
\frac{\partial ^{l}f_{1}}{\partial x_{1}^{l-k}\partial x_{2}^{k}}\frac{%
\partial ^{l}f_{2}}{\partial x_{1}^{k}\partial x_{2}^{l-k}}.
\end{equation*}
\end{itemize}

\subsection{Two-dimensional case}

Let $\dim V=2,$ then regular $\mathrm{SL}_{2}$-orbits in $S^{k}V^{\ast
},k\geq 3,$ have codimension $k-2,$ and therefore, regular $\mathrm{GL}_{2}$%
-orbits have codimension $k-3.$

Thus, begining with order $k=5,$ we have enough zero order invariants to
apply the 2-invariants principle.

The case, $k=3,$ we analysed in \cite{LY3} by using the Wagner connection and
associated quantization.

In the case, $k=4,$ we have two independent $\mathrm{SL}_{2}$-invariants: 
\begin{eqnarray*}
J_{2}\left( P\right)  &=&\frac{\left\{ P,P\right\} _{4}}{%
2^{7}3^{2}}, \\
J_{3}\left( P\right)  &=&\frac{\left\{ \left\{ P,P\right\}
_{2},P\right\} _{4}}{2^{11}3^{5}}.
\end{eqnarray*}%
These invariants have the form%
\begin{eqnarray*}
J_{2} &=&p_{0}p_{4}-4p_{1}p_{3}+3p_{2}^{2}, \\
J_{3}
&=&p_{0}p_{2}p_{4}-p_{0}p_{3}^{2}-p_{1}^{2}p_{4}+2p_{1}p_{2}p_{3}-p_{2}^{3},
\end{eqnarray*}%
when%
\begin{equation*}
P=p_{4}x^{4}+4p_{3}x^{3}y+6p_{2}x^{2}y^{2}+4p_{1}xy^{3}+p_{0}y^{4}.
\end{equation*}%
Remark, that these invariants have low degrees than the commonly used
discriminant of $P,$ that has degree 6 and equals $256J_{2}^{3}-6912J_{3}^{2}
$.

We get also $\mathrm{GL}_{2}$ -invariant 
\begin{equation*}
J=\frac{J_{2}^{3}}{J_{3}^{2}},
\end{equation*}%
which separate regular orbits: $J\neq 27.$

To get additional natural invariants for differential operators of order 4
and apply the 2-invariants principle, we can use invariants $I_{1}=\square
\left( 1\right) ,I_{2}=J$ as well as $I_{\alpha }=\square \left(
I_{1}^{\alpha _{1}}I_{2}^{\alpha _{2}}\right) ,$ for integers $\alpha
_{1},\alpha _{2}.$

In the case $k=5,$ we have 3 independent $\mathrm{SL}_{2}$-invariants:%
\begin{eqnarray*}
J_{4}\left( P\right)  &=&\left\{ c_{21},c_{21}\right\} _{2}, \\
J_{8}\left( P\right)  &=&\left\{ c_{21},c_{22}\right\} _{2}, \\
J_{12}\left( P\right)  &=&\left\{ c_{22},c_{22}\right\} _{2}
\end{eqnarray*}%
where%
\begin{equation*}
c_{21}=\left\{ P,P\right\} _{4},\ c_{3}=\left\{ P,c_{21}\right\} _{2},\
c_{22}=\left\{ c_{3},c_{3}\right\} _{2}.
\end{equation*}%
They give us two independent $\mathrm{GL}_{2}$ -invariants%
\begin{equation*}
I_{1}=\frac{J_{8}\left( P\right) }{J_{4}\left( P\right) ^{2}},I_{2}=\frac{%
J_{12}\left( P\right) }{J_{4}\left( P\right) J_{8}\left( P\right) }.
\end{equation*}%
Therefore, to apply the 2-invariants principle, we can use two of the
following natural invariants:%
\begin{equation*}
I_{1},I_{2},I_{0}=\square \left( 1\right) ,I_{\alpha }=\square \left(
I_{0}^{\alpha _{0}}I_{1}^{\alpha _{1}}I_{2}^{\alpha _{2}}\right) ,
\end{equation*}%
where $\alpha _{0}.\alpha _{1},\alpha _{2}$ are integers.

\subsection{Three-dimensional case}

Because the case $k=2$ is exceptional and was completely elaborated in \cite%
{LY2}, we consider here the case\\ $k=n=3$ only.

The codimension of regular $\mathrm{SL}_{3}$-orbits in $S^3V^*$ equals to 2, and
therefore to separate regular orbits we need two independent $\mathrm{SL}%
_{3}$-invariants.

First we take the $\mathrm{SL}_{3}$-invariant%
\begin{equation*}
J_{1}\left( P\right) =\left\{ P^{2},P^{2},P^{2}\right\} _{6}.
\end{equation*}%
The second \ $\mathrm{SL}_{3}$-invariant we choose the following 
\begin{equation*}
J_{2}\left( P\right) =\left\{ P,c_{1},c_{2}\right\} _{3},
\end{equation*}%
where%
\begin{equation*}
c_{1}=\left\{ P,P,P\right\} _{2},\quad c_{2}=\left\{ P,P,c_{1}\right\} _{2}.
\end{equation*}%
This gives us the $\mathrm{GL}_{3}$ -invariant%
\begin{equation*}
J=\frac{J_{2}^{2}}{J_{1}^{3}}.
\end{equation*}%
Then, as above, we can use $\mathrm{GL}_{3}$ -invariants in the form 
$I_{1}=J$, $I_{2}=\square(1)$, 
$I_{\alpha }=\square \left(I_{1}^{\alpha _{1}}I_{2}^{\alpha _{2}}\right)$ to choose three of them and apply the 3-invariants principle.

\section{$\mathbb{F}$-nonlinear operators}

Firstly, we fix the types of non-linearities that we are going to consider.

Let $u:J^{0}\left( M\right) =M\times \mathbb{R\rightarrow }\mathbb{R}$ be
the fibrewise coordinate on the zero order jet bundle.

Let $\mathbb{Q}\left( u\right) $ be the field of rational functions in $u$
and $\mathbb{F}\supset \mathbb{Q}\left( u\right) $ be a finite field
extension.

We denote by $\mathrm{Diff}_{k}\left( M,0\right) \subset \mathrm{Diff}%
_{k}\left( J^{0}\left( M\right) \right) $ the module of linear differential
operators on manifold $J^{0}\left( M\right)$, that have order $\leq k$ and
commuting with multiplication by $u$.

In other words, elements of $\mathrm{Diff}_{k}\left( M,0\right) $ are
differential operators $A$ that in local coordinates $\left(
x_{1},...,x_{n},u\right) ,$ where $\left( x_{1},...,x_{n}\right) $ are local
coordinates on $M,$ have the following form $\ $%
\begin{equation*}
A=\sum\limits_{\left\vert \sigma \right\vert \leq k}a_{\sigma }\left(
x,u\right) \frac{\partial ^{\left\vert \sigma \right\vert }}{\partial
x^{\sigma }}.
\end{equation*}
We say that this operator is of type $\mathbb{F}$ if functions $a_{\sigma
}\left( x,u\right) $ are smooth in $x$ and 
\begin{equation*}
a_{\sigma }\left( x,u\right) \in \mathbb{F},
\end{equation*}%
when $x$ is fixed.

The module of this type operators we denote by $\mathrm{Diff}_{k}\left( M,%
\mathbb{F}\right) .$

Respectively, we denote by 
\begin{equation*}
\widetilde{\psi }_{k}:\mathit{Diff}_{k}\left( M,\mathbb{F}\right) \rightarrow
J^{0}\left( M\right) 
\end{equation*}%
the corresponding vector bundles, and by $S_{A}:J^{0}\left( M\right)
\rightarrow\mathit{Diff}_{k}\left( M,\mathbb{F}\right) $ we denote sections of this
bundle that correspond to operators $A\in \mathrm{Diff}_{k}\left( M,\mathbb{%
F}\right)$.

Any such operator $A\in \mathrm{Diff}_{k}\left( M,\mathbb{F}\right) $ and
function $f\in C^{\infty }\left( M\right) $ defines a linear operator $%
A_{f}\in \mathrm{Diff}_{k}\left( M\right)$, where $A_{f}\left( g\right), 
g\in C^{\infty }\left( M\right)$ is the restriction function $A\left(
g\right) \in C^{\infty }\left( J^{0}M\right) $ on the graph $\Gamma
_{f}\subset J^{0}M$ of the function $f$.

Moreover, such operators $A$ define nonlinear differential operators $%
A_{w}\in \mathrm{diff}_{k}\left( M,\mathbb{F}\right) $ on the manifold $M,$
where%
\begin{equation*}
A_{w}\left( f\right) =A_{f}\left( f\right) ,
\end{equation*}%
for $f\in C^{\infty }\left( M\right) .$

Here, we denoted by $\mathrm{diff}_{k}\left( M,\mathbb{F}\right) $ the
space of scalar nonlinear differential operators on the manifold $M,$ having
order $\leq k$ and nonlinearity of $\mathbb{F}$-type.

In the local coordinates $\left( x_{1},...,x_{n},u\right) $ these operators
have the following forms: 
\begin{eqnarray*}
A_{f} &=&\sum_{\left\vert \sigma \right\vert \leq k}a_{\sigma
}\left( x,f\right) \frac{\partial ^{\left\vert \sigma \right\vert }}{%
\partial x^{\sigma }}, \\
A_{w}\left( f\right)  &=&\sum_{\left\vert \sigma \right\vert \leq
k}a_{\sigma }\left( x,f\right) \frac{\partial ^{\left\vert \sigma
\right\vert }f}{\partial x^{\sigma }}.
\end{eqnarray*}

\section{Natural invariants of $\mathbb{F}$\textit{-}nonlinear differential
operators}

The diffeomorphisn pseudogroup $\mathcal{G}\left( M\right) $ acts by the
prolongations $\phi ^{\left( k\right) },$ $\phi \in \mathcal{G}\left(
M\right) ,$ in the bundles of $k$-jets $J^{k}\left( M\right) ,$ and
therefore by diffeomorphisms $\phi ^{\left( 0\right) }:\left( x,u\right)
\rightarrow \left( \phi \left( x\right) ,u\right) $ acts on $J^{0}\left(
M\right)$. 

Thereby, we get a $\mathcal{G}\left( M\right) $-action in the
bundles $\varkappa _{l}:J^{l}\left( \widetilde{\psi }_{k}\right) \rightarrow
J^{0}\left( M\right) $ of $l$-jet of operators in $\mathrm{Diff}_{k}\left(
M,\mathbb{F}\right) .$

This $\mathcal{G}\left( M\right) $-action is not transitive on $J^{0}\left(
M\right) $ and therefore we will consider prolonged bundles $\widetilde{%
\varkappa }_{l}:J^{l}\left( \widetilde{\psi }_{k}\right) \rightarrow
J^{0}\left( M\right) \rightarrow M.$ As before, we pick a basic point $b\in
M $. Then any $\mathcal{G}\left( M\right) -$invariant function on $%
J^{l}\left( \widetilde{\psi }_{k}\right) $ is completely defined by their
values on the fibre $\widetilde{\varkappa }_{l}^{-1}\left( b\right) .$

In this context by natural invariants of $\mathbb{F}$\textit{-} differential
operators we mean functions from $\mathbb{Q}\left( \widetilde{\varkappa }%
_{l}\right) \otimes \mathbb{F},$ where $\mathbb{Q}\left( \widetilde{%
\varkappa }_{l}\right) $ consist of functions on $J^{l}\left( \widetilde{%
\psi }_{k}\right) ,$ that are rational along the fibres $\widetilde{%
\varkappa }_{l}.$

Taking their values on the fibre $\widetilde{\varkappa }_{l}^{-1}\left(
b\right) ,$ we get a field $\mathbf{Q}_{l},$ where $\mathcal{G}\left(
M\right) -$ invariants form a subfield $\mathbf{F}_{l}\subset \mathbf{Q}_{l}.$

It is easy to check that mappings 
\begin{eqnarray*}
\Theta _{k} &:&\mathrm{Diff}_{k}\left( M,\mathbb{F}\right) \times
C^{\infty }\left( M\right) \rightarrow \mathrm{Diff}_{k}\left(
M\right) , \\
\Theta _{k} &:&\left( A,f\right) \rightarrow A_{f},
\end{eqnarray*}%
and 
\begin{eqnarray*}
\widetilde{\Theta }_{k} &:&\mathrm{Diff}_{k}\left( M,\mathbb{F}%
\right) \rightarrow \mathrm{diff}_{k}\left( M\right) , \\
\widetilde{\Theta }_{k} &:&A\rightarrow A_{w},
\end{eqnarray*}%
are $\mathcal{G}\left( M\right) -$invariants, in the sence that 
\begin{eqnarray*}
\phi _{\ast }\left( A_{f}\right) &=&\phi _{\ast }^{\left( 0\right) }\left(
A\right) _{\phi _{\ast }\left( f\right) }, \\
\phi _{\ast }\left( A_{w}\right) &=&\phi _{\ast }^{\left( 0\right) }\left(
A\right) _{w}.
\end{eqnarray*}%
Therefore, elements of the field $\mathbb{Q}\left( \widetilde{\varkappa }%
_{l}\right) \otimes \mathbb{F},$ that are $\mathcal{G}\left( M\right) -$%
invariants, we call natural $l$-invariants of $\mathbb{F}$-nonlinear
differential operators.

These invariants form the field $\mathbf{F}_{l}$.

Consider two vector bundles over $J^{0}\left( M\right) :$ 
\begin{eqnarray*}
\varkappa _{l} &:&J^{l}\left( \widetilde{\psi }_{k}\right) \rightarrow
J^{0}\left( M\right) , \\
\pi _{l,0} &:&J^{l}\left( M\right) \rightarrow J^{0}\left( M\right) ,
\end{eqnarray*}%
and their Whitney sum $\xi _{l,r}=\varkappa _{l}\oplus \pi _{r,0}$.

We call it as the \textit{bundle of related pairs}. The elements of the
total space $R_{l,r}$ of this bundle are pairs $\left( [A]_{\left(
x,y\right) }^{l},[f]_{x}^{r}\right) $ consisting of $l$-jet $[A]_{\left(
x,y\right) }^{l}$ of operator $A\in \mathrm{Diff}_{k}\left( M,%
\mathbb{F}\right) $ at point $\left( x,y\right) \in J^{0}\left( M\right)
=M\times \mathbb{R},$ and $r$-jet $[f]_{x}^{r}$ of a function $f\in
C^{\infty }\left( M\right) $ such that $f\left( x\right) =y.$

Functions of $\mathbb{Q}\left( \xi _{l,r}\right) \otimes \mathbb{F}$, where $%
\mathbb{Q}\left( \xi _{l,r}\right) $ consist of functions rational along
fibres $\pi _{0}\circ \xi _{l,r}$,  that are $\mathcal{G}\left( M\right) -$%
invariants, we call invariants of related pairs.

Their values on the fibre $\pi _{0}\circ \xi _{l,r}$ at the point $b\in M$, form a field of $\mathbf{F}_{l,r}\subset \mathbf{Q}_{l,r}$ of invariants of related pairs.

We have natural embeddings $\mathbf{F}_{l,r}\subset \mathbf{F}_{l^{\prime
}, r^{\prime}}$ and $\mathbf{Q}_{l,r}\subset \mathbf{Q}_{l^{\prime }, r^{\prime }}$, 
when $l<l^{\prime }, r<r^{\prime }$.

Denote by $\mathcal{F}\left( r\right) $ and $\mathcal{Q}\left( r\right) $
their inductive limits in $l.$

Remark, that these fields have $\mathcal{G}\left( M\right) -$invariant
derivation 
\begin{equation*}
\nabla =\frac{d}{du}\otimes \partial _{u}^{\left( r\right) }.
\end{equation*}%
Namely, define the following%
\begin{equation*}
\nabla \left( fg\right) =\frac{df}{du}g+f\partial _{u}^{\left( r\right)
}\left( g\right) ,
\end{equation*}%
where $f$ function on $J^{N}\left( \widetilde{\psi }_{k}\right) $ and $g$ on 
$J^{r}\left( M\right) ,$ and $\partial _{u}^{\left( r\right) }$ is the $r$
-th prolongation of vector field $\partial _{u}$ in $J^{r}\left( M\right) .$

This derivation is $\mathcal{G}\left( M\right)-$invariant because $u$ and $%
\partial_{u}$ are $\mathcal{G}\left( M\right)-$in\-variant.

\subsection{Construction of natural invariants}

At first, we remark that $\mathcal{F}\left( 0\right) -$ is the field of
natural invariants of of $\mathbb{F}$-nonlinear differential operators.

Secondly, as we have seen, the maps 
\begin{eqnarray*}
\varrho _{l} &:&R_{l, l}\rightarrow J^{l}\left( \psi _{k}\right) , \\
\rho _{l} &:&\left( [A]_{\left( x, y\right) }^{l},[f]_{x}^{l}\right)
\rightarrow \lbrack A_{f}]_{x}^{l},
\end{eqnarray*}%
are $\mathcal{G}\left( M\right) -$invariant.

Therefore, for any natural $l$-invariant $I$ of linear differential
operators we get invariant $\rho _{l}^{\ast }\left( I\right) $ that is the
natural invariant of related pairs.

To get invariants of nonlinear $\mathbb{F}$\textit{-} differential operators
from the invariants of related pairs we will apply the descent procedure
that was discussed in our previous papers \cite{LY2}.

Namely, let $I_{0}\in \mathcal{F}\left( r\right) $ be an invariant of
related pairs and let $I_{1}=\nabla \left( I_{0}\right) , \ldots ,I_{N}=\nabla
^{N}\left( I_{0}\right) $ be its invariant derivatives. Remark that the
transcendence degree of the field $\mathcal{Q}\left( r\right) $over $%
\mathcal{Q}\left( 0\right) $ equals $\dim \pi _{r,0}$. Therefore, if $N\geq\dim\pi _{r,0}$, then there must be polynomial relations between the invariants $I_{0},\ldots, I_{N}$.  

Denote by $\mathcal{J}\subset \mathcal{Q}\left( 0\right) [X_{0},...,X_{N}]$
the ideal of these relations.

\begin{theorem}
Let $b_{1},...,b\,_{K}$ be the reduced Groebner basis in the ideal $\mathcal{%
J}$ with respect to the standard lexicographic order. Then the coefficients
of polynomials $b_{i}\in \mathcal{Q}\left( 0\right) $ are natural invariants
of $\mathbb{F}$- nonlinear operators.
\end{theorem}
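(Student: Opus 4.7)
The plan is to exploit uniqueness of the reduced Groebner basis together with the evident $\mathcal{G}(M)$-invariance of the ideal $\mathcal{J}$. First I would equip the polynomial ring $\mathcal{Q}(0)[X_{0},\ldots,X_{N}]$ with a $\mathcal{G}(M)$-action by letting $\phi\in\mathcal{G}(M)$ act on coefficients via its action on $\mathcal{Q}(0)$ and fix each indeterminate $X_{j}$. The ideal $\mathcal{J}$ is then $\mathcal{G}(M)$-invariant: since $I_{0}\in\mathcal{F}(r)$ and the derivation $\nabla$ are both $\mathcal{G}(M)$-invariant, every $I_{j}=\nabla^{j}I_{0}$ is invariant, and for $p=\sum c_{\alpha}X^{\alpha}\in\mathcal{J}$ one has $(\phi\cdot p)(I_{0},\ldots,I_{N})=\phi\bigl(p(I_{0},\ldots,I_{N})\bigr)=0$, so $\phi\cdot p\in\mathcal{J}$.

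Second, I would observe that the standard lexicographic term order lives on the exponent lattice of the indeterminates $X_{j}$, which the $\mathcal{G}(M)$-action leaves pointwise fixed. Consequently, for any polynomial $p$ the leading monomial of $\phi\cdot p$ coincides with that of $p$, and if $p$ is monic (normalized as in the definition of a reduced Groebner basis) then so is $\phi\cdot p$. Moreover, $\phi$ sends a reduced polynomial to a reduced polynomial, because reducedness is a condition on the support monomials which are preserved. Hence the tuple $\{\phi\cdot b_{1},\ldots,\phi\cdot b_{K}\}$ is again a reduced Groebner basis of the same ideal $\phi\cdot\mathcal{J}=\mathcal{J}$ for the same term order.

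Third, I would invoke the uniqueness of the reduced Groebner basis over the field $\mathcal{Q}(0)$. Since the leading monomials of the $b_{i}$ are pairwise distinct and match those of the $\phi\cdot b_{i}$, the two bases coincide elementwise: $\phi\cdot b_{i}=b_{i}$ for every $i$ and every $\phi\in\mathcal{G}(M)$. Writing $b_{i}=\sum_{\alpha}c_{i,\alpha}X^{\alpha}$ with $c_{i,\alpha}\in\mathcal{Q}(0)$, the invariance $\phi\cdot b_{i}=b_{i}$ means $\phi(c_{i,\alpha})=c_{i,\alpha}$ for all $\phi$; thus each coefficient lies in the fixed subfield $\mathcal{F}(0)\subset\mathcal{Q}(0)$, which is precisely the field of natural invariants of $\mathbb{F}$-nonlinear differential operators.

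The main obstacle I anticipate is the normalization/uniqueness issue: one must confirm that reducedness is defined by a condition invariant under coefficient-only automorphisms of $\mathcal{Q}(0)[X_{0},\ldots,X_{N}]$, and that $\mathcal{Q}(0)$ is genuinely a field so that classical Buchberger theory applies verbatim. Both points are structural rather than computational, but they are what makes the argument work; once they are in place, the descent from invariance of the ideal to invariance of each coefficient of each basis element is automatic.
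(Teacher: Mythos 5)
Your proposal is correct and follows essentially the same route as the paper's proof: invariance of the ideal $\mathcal{J}$ and of the lexicographic order under the $\mathcal{G}(M)$-action on coefficients, combined with uniqueness of the reduced Groebner basis, forces $\phi\cdot b_{i}=b_{i}$ and hence invariance of each coefficient. You merely spell out the steps (the action fixing the indeterminates, preservation of leading monomials and reducedness, descent to the fixed field $\mathcal{F}(0)$) that the paper states in compressed form with a reference to \cite{LY2} and \cite{Cox}.
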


\begin{proof}
The proof repeats the proof of the similar theorem in \cite{LY2}. \newline
Namely, by the construction, the diffeomorphism pseudogroup $\mathcal{G}%
\left( M\right) $ preserves the ideal $\mathcal{J}$ and the lexicographic
order.\newline
Remark also, that a reduced Groebner basis in an ideal with respect to the
lexicographic order is unique \cite{Cox}. Therefore, the $\mathcal{G}\left(
M\right) $-action preserves elements of the basis as well as their
coefficients.
\end{proof}

\section{Natural equivalence of $\mathbb{F}$-nonlinear operators}

First of all we remark that theorem on n-invariants could be applied to
operators $A\in \mathrm{Diff}_{k}\left( M,\mathbb{F}\right) .$

Thus, let $I_{0}=u,I_{1},...,I_{n}$ be invariants in general position and
such that 
\begin{eqnarray*}
\widehat{d}I_{0}\wedge \cdots \wedge \widehat{d}I_{n} &\neq &0. \\
\widehat{d}I_{1}\wedge \cdots \wedge \widehat{d}I_{n} &\neq &0,
\end{eqnarray*}%
and $u,I_{1}\left( A\right) ,...,I_{n}\left( A\right) $ are coordinates in a
domain $\mathcal{O\subset }J^{0}\left( M\right) .$

Then, we can substitute invariants $I_{i},$ $i>0,$ by new invariants $%
I_{i}^{\prime }=F_{i}\left( u,I_{1},...,I_{n}\right) $ such that 
\begin{equation}
\frac{dI_{i}^{\prime }\left( A\right) }{du}=0,  \label{adj ext}
\end{equation}%
for $i=1,..,n.$

In this situation, we call a triple $\left( A,\mathcal{O},\left(
u,I_{1},...,I_{n}\right) \right) $ \textit{adjusted in domain }$\mathcal{%
O\subset }J^{0}\left( M\right) $ if functions $\left( u,I_{1}\left( A\right)
,..,I_{n}\left( A\right) \right) $ are coordinates in the domain $\mathcal{O}%
,$ satisfied the above conditions (\ref{adj ext}).

Consider the extanded space %$\widetilde{\mathcal{\Phi }_{k}}=$
$\widetilde{\Phi} _{k}=\mathbb{R}^{n+1}\times 
\mathbb{R}^{\binom{n+k}{k}}$ with coordinates $\left(
y_{0},y_{1},...,y_{n},Y_{\alpha },\left\vert \alpha \right\vert \leq
k\right) .$

Let $\widetilde{\square }:C^{\infty }\left( J^{\infty }\left( \widetilde{%
\psi }_{k}\right) \right) \rightarrow C^{\infty }\left( J^{\infty +k}\left( 
\widetilde{\psi }_{k}\right) \right) $ the universal operator, associated
with operators $\mathrm{Diff}_{k}\left( M,\mathbb{F}\right)$, and 
\begin{eqnarray*}
%\widetilde{\phi }_{A} &:&\mathcal{O\subset }J^{0}\mathcal{\left( M\right)
\widetilde{\phi }_{A} &:&\mathcal{O}\subset J^{0}\left( M\right)
\rightarrow\widetilde{\Phi}_{k}, \\
\widetilde{\phi }_{A} &:&\left( x,u\right) \in \mathcal{O}\rightarrow \left(
y_{0}=u,y_{1}=I_{1}\left( A\right), \ldots, y_{n}=I_{n}\left( A\right), Y_{\alpha }=\widetilde{J}_{\alpha }\left( A\right) \right),
\end{eqnarray*}%
where 
\begin{equation*}
\widetilde{J}_{\alpha }=\frac{1}{\alpha !}\widetilde{\square }\left(
I^{\alpha }\right) ,
\end{equation*}%
and $\alpha =\left( \alpha _{1},..,\alpha _{n}\right) $ are multi indices of
lenght $\leq k.$

Then, the $\left( n+1\right) $-dimentional submanifold $\widetilde{\Sigma }%
_{A}=\widetilde{\phi }_{A}\left( \mathcal{O}\right) \subset \widetilde{%
\Phi}_{k}$ we call\textit{\ model of the} $\mathbb{F}$ -\textit{%
nonlinear operator} $A$ in the domain $\mathcal{O\subset }J^{0}\left(
M\right) $, and, as above, \ we get the following result.

\begin{theorem}
Let $\left( A_{1},\mathcal{O}_{1},\left( u,I_{1},...,I_{n}\right) \right) $
and $\left( A_{2},\mathcal{O}_{2},\left( u,I_{1},...,I_{n}\right) \right) $
, $A_{i}\in \mathrm{Diff}_{k}\left( M,\mathbb{F}\right) ,\mathcal{O}%
_{i}\subset J^{0}\left( M\right) ,$ are adjusted triples. Then there exist a
diffeomorphism $\phi \in \mathcal{G}\left( M\right) $ such that $\phi
^{\left( 0\right) }:\mathcal{O}_{1}\rightarrow \mathcal{O}_{2},$ and $\phi
_{\ast }^{\left( 0\right) }\left( A_{1}\right) =A_{2}$ if and only if their
models are equal, $\widetilde{\Sigma }_{A_{1}}=$ $\widetilde{\Sigma }%
_{A_{2}}.$
\end{theorem}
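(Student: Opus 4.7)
The plan is to mirror the proof of Theorem~3 for the linear case, keeping track of the extra fibre coordinate $u$ on $J^{0}(M)$ and the $\mathbb{F}$-nonlinearity. Both directions hinge on the naturality of the invariants $I_{i}$ and of $\widetilde{J}_{\alpha}=\frac{1}{\alpha!}\widetilde{\square}(I^{\alpha})$, together with the $\mathcal{G}(M)$-invariance of the universal operator $\widetilde{\square}$ established earlier. Necessity then follows almost by inspection: assuming $\phi \in \mathcal{G}(M)$ satisfies $\phi^{(0)}(\mathcal{O}_{1}) = \mathcal{O}_{2}$ and $\phi_{*}^{(0)}(A_{1}) = A_{2}$, naturality yields $I_{i}(A_{1}) = I_{i}(A_{2}) \circ \phi^{(0)}$ and $\widetilde{J}_{\alpha}(A_{1}) = \widetilde{J}_{\alpha}(A_{2}) \circ \phi^{(0)}$, while $\phi^{(0)}$ preserves $u$; assembled, these identities read $\widetilde{\phi}_{A_{2}} \circ \phi^{(0)} = \widetilde{\phi}_{A_{1}}$, hence $\widetilde{\Sigma}_{A_{1}} = \widetilde{\Sigma}_{A_{2}}$.

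For sufficiency I would invert both embeddings using adjustedness: since $(u, I_{1}(A_{i}), \ldots, I_{n}(A_{i}))$ are coordinates on $\mathcal{O}_{i}$, each $\widetilde{\phi}_{A_{i}}$ is a diffeomorphism onto $\widetilde{\Sigma}_{A_{i}}$, and the composition
$$
\Psi = \widetilde{\phi}_{A_{2}}^{-1} \circ \widetilde{\phi}_{A_{1}} \colon \mathcal{O}_{1} \to \mathcal{O}_{2}
$$
is a well-defined diffeomorphism. Because $y_{0}=u$ on both models, $\Psi$ preserves $u$; because the adjustment identity $dI_{i}(A_{j})/du = 0$ forces each $I_{i}(A_{j})$ to depend only on $x \in M$, the $M$-component of $\Psi$ is independent of $u$. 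This is precisely what enables the descent $\Psi = \phi^{(0)}$ for some local diffeomorphism $\phi$ of $M$, i.e.\ an honest element of $\mathcal{G}(M)$.

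It then remains to show $\phi_{*}^{(0)}(A_{1}) = A_{2}$. Here I would reproduce the computation of Section~4 with $\widetilde{\square}$ in place of $\square$: restricting $\widetilde{J}_{\alpha}$ to the section $S_{A}$ expresses $A$ in the adjusted chart as $A = \sum_{|\alpha| \le k} \widetilde{J}_{\alpha}(A)\, \partial_{I}^{\alpha}$, with coefficients that are functions of $(I_{1}, \ldots, I_{n}, u)$. Since $\Psi$ intertwines the two adjusted charts and the identity $\widetilde{\Sigma}_{A_{1}} = \widetilde{\Sigma}_{A_{2}}$ matches the $Y_{\alpha}$-coordinates, the coefficients agree after transport by $\phi^{(0)}$ and the operators coincide. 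The main obstacle I expect is making the descent step airtight: one has to check that the conditions $dI_{i}/du = 0$ really produce a $\Psi$ whose horizontal component is globally $u$-independent on $\mathcal{O}_{1}$, so that the fibrewise matching of the $u$-dependent coefficients $\widetilde{J}_{\alpha}(A_{i})$ — which is the feature distinguishing the $\mathbb{F}$-nonlinear case from the linear one — is transported consistently by a single $\phi \in \mathcal{G}(M)$.
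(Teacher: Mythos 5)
Your proposal is correct and follows exactly the route the paper intends: the paper gives no explicit proof, deriving the statement ``as above'' from the $n$-invariants principle and the model construction, and your argument --- naturality of $I_{i}$, $\widetilde{J}_{\alpha}$ and $\widetilde{\square}$ for necessity, and inversion of the adjusted charts $\widetilde{\phi}_{A_{i}}$ followed by descent of $\Psi$ to some $\phi^{(0)}$ using $u$-invariance and the conditions $dI_{i}(A)/du=0$ for sufficiency --- is precisely the missing detail. The final step, identifying the $Y_{\alpha}$-coordinates of the models with the coefficients of $A_{i}$ in the adjusted chart so that $\widetilde{\Sigma}_{A_{1}}=\widetilde{\Sigma}_{A_{2}}$ forces $\phi_{\ast}^{(0)}(A_{1})=A_{2}$, is also the paper's mechanism.
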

\bigskip

%---------------------------------------------------------------------------------------------

\end{document}